\newcommand{\TheFunding}{This work was supported by the Japan Society for the Promotion of Science (JSPS) KAKENHI grants JP21K17710 and JP24K20737}
\newcommand{\TheTitle}{Safeguarding adaptive methods: global convergence of Barzilai-Borwein and other stepsize choices}
\title{\LARGE\bf\TheTitle}
\author{Hongjia Ou and Andreas Themelis%
	\thanks{%
		\TheAddressKUJ.\newline
		{\em Email:}
		\scriptsize
		\sf
		\emailLink{ou.honjia.069@s.kyushu-u.ac.jp},
		\emailLink{andreas.themelis@ees.kyushu-u.ac.jp}%
	}%
	\thanks{\TheFunding.}%
}
\renewcommand{\includetikz}[2][]{\includegraphics[#1]{Pics/Tikz/#2.pdf}}
\begin{document}

	\maketitle

	\begin{abstract}
		Leveraging on recent advancements on adaptive methods for convex minimization problems, this paper provides a linesearch-free proximal gradient framework for globalizing the convergence of popular stepsize choices such as Barzilai-Borwein and one-dimensional Anderson acceleration.
This framework can cope with problems in which the gradient of the differentiable function is merely locally H\"older continuous.
Our analysis not only encompasses but also refines existing results upon which it builds.
The theory is corroborated by numerical evidence that showcases the synergetic interplay between fast stepsize selections and adaptive methods.

	\end{abstract}

	\section{Introduction}
		Convex nonsmooth optimization problems are encountered in various engineering applications such as image denoising \cite{beck2009fast}, signal processing and digital communication \cite{luo2003applications}, machine learning \cite{bubeck2014theory}, and control \cite{li2022role}, to name a few.
Many such problems can be cast in composite form as
\[\tag{P}\label{eq:P}
	\minimize_{\x\in\R^n}\varphi(\x)\coloneqq f(\x)+g(\x),
\]
where \(\func{f}{\R^n}{\R}\) is convex and differentiable, and \(\func{g}{\R^n}{\R\cup\set{\infty}}\) is proper and lower semicontinuous (lsc).
A textbook algorithm in this setting is the proximal gradient method that involves updates of the form
\begin{equation}\label{eq:PG}
	\xk*
=
	\prox_{\gamk*g}(\xk-\gamk*\nabla f(\xk)),
\end{equation}
where
\begin{equation}\label{eq:prox}
	\prox_{g}(x)
\coloneqq
	\argmin_{w\in\R^n}\set{g(w)+\tfrac{1}{2}\norm{w-\x}^2}
\end{equation}
is the \emph{proximal mapping} of \(g\), available in closed form for many practical applications \cite{beck2017first}.

In \eqref{eq:PG}, the choice of the \emph{stepsize} parameters \(\gamk*\) plays a pivotal role in dictating the efficiency of the algorithm.
Traditional constant stepsizes require the gradient of the function \(f\) to satisfy global Lipschitz continuity \cite[Prop. 1.2.3]{bertsekas2016nonlinear}, and stepsizes that are either too large or too small can lead to slow convergence or failure thereof.
Alternatively, the utilization of backtracking linesearch for stepsize determination \cite[\S9.3]{boyd2004convex} provides an online tuning that works under less restrictive assumptions, but incurs computational overhead due to the inner subroutines for assessing each stepsize.

A revolutionary concept was conceived in \cite{malitsky2020adaptive}, which proposes an adaptive method for automatically tuning the stepsizes without backtracking routines to address scenarios of convex smooth optimization, that is, when \(g = 0\) in \eqref{eq:PG}, yet under mere \emph{local} (as opposed to \emph{global}) Lipschitz gradient continuity.
This pioneering work has been attracting considerable attention in the last couple of years, leading to proximal (and primal-dual) variants \cite{latafat2023adaptive,malitsky2023adaptive,latafat2023convergence}.

The present paper continues this trend, inspired by two recent interesting developments: \cite{oikonomidis2024adaptive} which reduces the assumptions on \(\nabla f\) to \emph{local H\"older continuity}, and \cite{zhou2024adabb} which incorporates Barzilai-Borwein stepsizes \cite{barzilai1988two} to enhance convergence speed.
The Barzilai-Borwein can be classified as a one-dimensional quasi-Newton method, and its convergence without backtracking routines has only been established for smooth and strongly convex quadratic problems \cite{raydan1993barzilai,dai2002rlinear,li2021faster}.
A first attempt beyond the quadratic case was advanced in \cite{burdakov2019stabilized} with the {\algnamefont stabilized BB} method, which provides a suitable dampening of the BB stepsizes ensuring convergence for strongly convex and smooth problems.
However, convergence is established only for (undefined) small enough choices of a parameter, precluding an off-the-shelf use in practice.

The above-mentioned \cite{zhou2024adabb} obviates this problem by designing a tailored algorithm, \adaPBB, that carefully integrates the adaptive ideas of \cite{malitsky2020adaptive} and successive works.
The method is an alternative way of ``stabilizing'' BB stepsizes, promoting larger stepsizes (on average) which are associated with faster convergence (see the next section).


The present paper patterns the same rationale and proposes a simpler ``stabilizing'' approach, not necessarily tied to the choice of BB stepsizes, that builds upon the recent findings of \cite{oikonomidis2024adaptive} to apply to a much broader class of problems.
Our working assumptions on problem \eqref{eq:P} are the following:

\begin{assumption}\label{ass:basic}
	The following hold in problem \eqref{eq:P}:
	\begin{enumeratass}[widest=3]
	\item \label{ass:f}%
		\(\func{f}{\R^n}{\R}\) is convex and has locally H\"older-continuous gradient of order \(\q\in(0,1]\) (see \cref{sec:recipe}).
	\item \label{ass:g}%
		\(\func{g}{\R^n}{\Rinf}\) is proper, lsc, and convex.
	\item
		A solution exists: \(\argmin\varphi\neq\emptyset\).
	\end{enumeratass}
\end{assumption}

Although \(\q>0\) is ultimately required in the analysis, in some results that are valid in the limiting case \(\q=0\) we will explicitly mention the extended range \(\q\in[0,1]\).
As formally elaborated in \cite{oikonomidis2024adaptive}, the case \(\q=0\) amounts to \(f\) being merely convex and real-valued, and the notation \(\nabla f\) is used to indicate any \emph{subgradient} selection \(\nabla f(\x)\in\partial f(\x)\).

\subsection{Contributions}
	As a means to globalize in a linesearch-free fashion a virtually arbitrary variety of stepsize choices, we employ adaptive methods as \emph{``safeguarding''} mechanisms to automatically dampen, when necessary, overshooting stepsizes that may cause divergence.
	Our focus is on \adaPG{} developed in \cite{latafat2023convergence}, as it constitutes an umbrella framework enabling us to encompass many other adaptive methods at once.

	To embrace the generality of \cref{ass:basic}, we provide a ``convergence recipe'' that identifies which are the stepsize choices that can be \emph{safeguarded}, including, but not limited to, Barzilai-Borwein \cite{barzilai1988two} and Anderson acceleration--like choices \cite{anderson1965iterative,fang2009two}.
	Even when specialized to the plain \adaPG{} algorithm, this general view results in tighter constants for the convergence rate compared to the tailored analysis of \cite{oikonomidis2024adaptive}.

	Finally, we provide a list of stepsize choices that comply with our framework, and provide numerical evidence in support of their employment.

\subsection{Paper organization}
	In \cref{sec:motivations} we provide a brief overview on adaptive methods and build on the motivating examples that led to this work; the section provides a gradual walkthrough to the idea and the rationale of \emph{``safeguarding''}.
	The technical core of the paper is in \cref{sec:recipe}, where the general convergence recipe for a class of adaptive methods is presented and validated with formal proofs.
	Some stepsize choices compliant with the recipe are listed in \cref{sec:stepsizes}, and \cref{sec:simulations} presents numerical experiments followed by some closing remarks.

	\section{Adaptive methods as safeguards}
		\label{sec:motivations}%
		In contrast to the employment of constant parameters based on global (worst-case) Lipschitz moduli, adaptive methods generate stepsizes at every iteration from \emph{local} estimates
based on \emph{past} information, thereby completely waiving any need of inner subroutines.
At iteration \(k\), the new stepsize \(\gamk*\) is retrieved based on the following quantities:
\begin{subequations}\label{eq:lL_noq}
	\begin{align}
		\ell_k
	\coloneqq{} &
		\frac{
			\innprod{\nabla f(\xk)-\nabla f(\x^{k-1})}{\xk-\x^{k-1}}
		}{
			\norm{\xk-\x^{k-1}}^2
		},
	\\
		L_k
	\coloneqq{} &
		\frac{
			\norm{\nabla f(\xk)-\nabla f(\x^{k-1})}
		}{
			\norm{\xk-\x^{k-1}}
		},
	\shortintertext{and/or}
		c_k
	\coloneqq{} &
		\frac{
			\norm{\nabla f(\xk)-\nabla f(\x^{k-1})}^2
		}{
			\innprod{\nabla f(\xk)-\nabla f(\x^{k-1})}{\xk-\x^{k-1}}
		}.
	\end{align}
\end{subequations}
When \(\nabla f\) is \emph{locally} Lipschitz, \cite{latafat2023convergence} shows that the update
\begin{equation}\label{eq:adaPG}
	\gamk*
\coloneqq
	\min\set{
		\gamk\sqrt{\tfrac{1}{\rp}+\tfrac{\gamk}{\gam_{k-1}}},
		\tfrac{\gamk}{
			\sqrt{2\left[\gamk^2L_k^2-(2-\rp)\gamk\ell_k+1-\rp\right]_+}
		}
	}
\end{equation}
for some \(\rp\in[1,2]\) guarantees, for convex problems, convergence of \eqref{eq:PG} to a solution (\([{}\cdot{}]_+\coloneqq\max\set{{}\cdot{},0}\)).
The remarkable performance in practice owes to two factors: the ability to adapt to the \emph{local} geometry of \(f\), and the consequent employment of much larger stepsizes than textbook constant ``worst-case'' choices would yield.
It is worth noting that the analysis of \cite{latafat2023convergence} and related references still holds if equality in \eqref{eq:adaPG} is replaced by a ``\(\leq\)'', as long as the stepsizes remain bounded away from zero.
This fact was used to demonstrate that the adaptive schemes \cite{malitsky2020adaptive,latafat2023adaptive,malitsky2023adaptive} proposed in previous works could be covered by the same theory, despite the update in \eqref{eq:adaPG} may be larger.
Nevertheless, as the sublinear worst-case rate
\[
	\min_{k\leq K}\varphi(\x^k)-\min\varphi
\leq
	\frac{C}{\sum_{k=1}^{K+1}\gamk}
\]
for some \(C>0\) confirms, see \cref{thm:sumgamk}, larger stepsizes are typically associated with faster convergence, suggesting that selecting ``\(=\)'' in \eqref{eq:adaPG} would be preferable.

\subsection{Motivating examples: Barzilai-Borwein stepsizes}
	A first motivation for this work is the observation that the above argument may be shortsighted.
	To gain an intuition, consider \(\rp=1\) in \eqref{eq:adaPG} and assume that \(\nabla f\) is globally \(L_f\)-Lipschitz continuous for simplicity; as shown in \cite{latafat2023convergence}, a lower bound \(\gamk\geq\frac{1}{\sqrt{2}L_f}\) can easily be derived.
	This estimate can in fact be tightened by considering the cases in which the stepsize increases or not:
	using \(\ell_k\leq L_k\leq c_k\leq L_f\) and \(\ell_kc_k=L_k^2\) \cite[Lem. 2.1]{latafat2023adaptive} it is easy to see that
	\[
		\gamk*\geq\gamk
	~~\vee~~
			\gamk
		\geq
			\tfrac{1+\sqrt{3}}{2c_k}
	\]
	holds for every \(k\).
	\begin{subequations}\label{eq:adaPGBB}
		By loosening the update into, say,
		\begin{equation}\label{eq:adaPGBBlong}
			\gamk*
		=
			\min\set{
				\gamk\sqrt{1+\tfrac{\gamk}{\gam_{k-1}}},
				\tfrac{\gamk}{
					\sqrt{2\left[\gamk^2L_k^2-\gamk\ell_k\right]_+}
				}{\red{},
				\tfrac{1}{\ell_k}}
			},
		\end{equation}
		a similar comparison yields
		\[
			\gamk*\geq\gamk
		~~\vee~~
			\gamk
		\geq
			\tfrac{3}{2c_k}
		~~\vee~~
			\gamk\geq\gamk*=\tfrac{1}{\ell_k}.
		\]
		Similarly, reducing the last term as
		\begin{equation}\label{eq:adaPGBBshort}
			\gamk*
		=
			\min\set{
				\gamk\sqrt{1+\tfrac{\gamk}{\gam_{k-1}}},
				\tfrac{\gamk}{
					\sqrt{2\left[\gamk^2L_k^2-\gamk\ell_k\right]_+}
				}{\red{},
				\tfrac{1}{c_k}}
			}
		\end{equation}
		benefits the second case, with new lower bounds given by
		\[
			\gamk*\geq\gamk
		~~\vee~~
			\gamk
		\geq
			\tfrac{2}{c_k}
		~~\vee~~
			\gamk\geq\gamk*=\tfrac{1}{c_k}.
		\]
	\end{subequations}

	Interestingly, \eqref{eq:adaPGBB} amount to \emph{dampened} versions of the celebrated \emph{long} and \emph{short Barzilai-Borwein} (BB) stepsizes \(\nicefrac{1}{\ell_k}\) and \(\nicefrac{1}{c_k}\) \cite{barzilai1988two}.
	It is easy to see that through the updates \eqref{eq:PG} and \eqref{eq:adaPGBB} the generated stepsize sequences are bounded away from zero; then, since \(\gamk*\) in both is dominated by the right-hand side of \eqref{eq:adaPG} (with \(\rp=1\)), the analysis of \cite{latafat2023convergence} directly applies, resulting in a simple update rule to make BB stepsizes globally convergent, for possibly nonsmooth convex problems.

	This is the sense in which we say that \adaPG{} acts as a \emph{``safeguard''} for BB stepsizes, namely in providing a suitable dampening to trigger global convergence.
	These claims will be substantiated with a formal proof addressing a richer choice of stepsizes spanning beyond long and short BB.
%


\subsection{The locally H\"older case}
	As a matter of fact, the (locally) Lipschitz differentiable case is a no-brainer consequence of \cite[Thm. 1.1]{latafat2023convergence}.
	Nevertheless, as the numerical evidence demonstrates, such a simple change in the update rule can have a strong impact.
	A deeper theoretical contribution in this paper is provided by the \emph{locally H\"older} analysis which builds upon the recent developments of \cite{oikonomidis2024adaptive}.
	In this setting, the theory of \cite{oikonomidis2024adaptive} cannot be invoked, since the proofs therein are tightly linked to the update rule \eqref{eq:adaPG}, complicated by possibly vanishing stepsizes.
	The proposed solution is a convergence recipe prescribing two conditions: (i) do not overshoot the update \eqref{eq:adaPG} of \adaPG, and (ii) control the stepsize from below based on a past fixed-point residual.
	Surprisingly, not only does this solution prove that \adaPG{} can \emph{safeguard} even in the locally H\"older regime, but it also improves the results in \cite{oikonomidis2024adaptive} for \adaPG{} itself by providing tighter rate constants.



	\section{Convergence of adaptive methods revisited}
		\label{sec:recipe}%
		In what follows we
adopt the same notation of \cite{latafat2023adaptive}:
\begin{equation}
	\rhok\coloneqq\tfrac{\gamk}{\gam_{k-1}},
~~
	P_k\coloneqq\varphi(\xk)-\min\varphi,
~~
	P_k^{\rm min}\coloneqq\min_{i\leq k}P_i.
\end{equation}
Moreover, following \cite{oikonomidis2024adaptive}, for \(\q\in[0,1]\) we introduce
\begin{equation}\label{eq:lamk}
	\lam_k
\coloneqq
	\frac{\gamk}{\norm{\xk-\x^{k-1}}^{1-\q}}
\end{equation}
acting as a \emph{scaled} stepsize, and the local H\"older estimates
\begin{equation}\label{eq:lL}
	\lk
\coloneqq
	\ell_k\norm{\xk-\x^{k-1}}^{1-\q}
~~\text{and}~~
	\Lk
\coloneqq
	L_k\norm{\xk-\x^{k-1}}^{1-\q}.
\end{equation}
When \(\nabla f\) is locally \(\q\)-H\"older continuous, for any compact convex set \(\Omega\subset\R^n\) there exists a \(\q\)-H\"older modulus \(\L_{\Omega}>0\) for \(\nabla f\) on \(\Omega\), namely such that
\[
	\norm{\nabla f(\x)-\nabla f(y)}
\leq
	\L_{\Omega}
	\norm{\x-y}^{\q}
\quad
	\forall\x,y\in\Omega.
\]
Then, as long as \(\seq{\xk}\) remains in a bounded and convex set \(\Omega\), one has that \cite[Eq. (13)]{oikonomidis2024adaptive}
\begin{equation}\label{eq:lLL}
	\l_k\leq\L_k\leq\L_{\Omega}.
\end{equation}

\subsection{A convergence recipe}
	We now list the fundamental ingredients of the convergence analysis.
	After stating some preliminary lemmas, we will prove that the given items are all is needed for ensuring convergence of proximal gradient iterations in the generality of \cref{ass:basic}.
	In the following \cref{sec:stepsizes} we will translate these requirements into conditions on stepsize oracles that can be \emph{safeguarded}, followed by some notable examples.

	\begin{center}
		\fbox{\parbox{.97\linewidth}{%
			\textbf{Convergence recipe}

			Let a sequence \(\seq{\xk}\) be generated by proximal gradient iterations \eqref{eq:PG} with stepsizes \(\seq{\gamk}\subset\R_{++}\), and let \(\ell_k\) and \(L_k\) be as in \eqref{eq:lL_noq}.
			We say that \(\seq{\xk}\) and \(\seq{\gamk}\) satisfy \cref{prop:adaPG1} (resp. \ref{prop:adaPG2}/\ref{prop:lammin}) if there exist \(\q\in(0,1]\), \(\rp\in[1,2]\) and \(\lam_{\rm min}>0\) such that, for all \(k\in\N\):
			\begin{enumeratprop}[topsep=5pt]
			\item \label{prop:adaPG1}%
				\(1+\rp \rhok- \rp\rhok*^2\geq0\)
			\item \label{prop:adaPG2}%
				\(
					\tfrac{1}{2}
					-
					\rhok*^2\bigl[
						\gamk^2L_k^2
						-
						\gamk\ell_k(2-\rp)
						+
						1-\rp
					\bigr]
				\geq
					0
				\)
			\item \label{prop:lammin}%
				either \(\gamk*\geq\gamk\) or there exists and index \(j_k\leq k\) such that \(\min\set{\gam_{j_k},\gamk*}\geq\lam_{\rm min}\norm{x^{j_k}-x^{j_k-1}}^{1-\q}\).
			\end{enumeratprop}
		}}%
	\end{center}

	\Cref{prop:adaPG1,prop:adaPG2} dictate that the stepsize should not overshoot the one in \eqref{eq:adaPG}: \adaPG{} shall act as \emph{safeguard}.
	As stated in \cref{thm:P1P2}, these two alone already ensure boundedness of the generated iterates \(\xk\).
	The remaining \cref{prop:lammin} demands a bound from below: for \(\q=1\), it is tantamount to requiring \(\seq{\gamk}\) bounded away from zero;
	more generally, it involves a bound on a \emph{scaled} stepsize \(\lam_i\), cf. \eqref{eq:lamk}, akin to that observed in \cite[Lem. 3.6]{oikonomidis2024adaptive}; see \cref{thm:properties}.
	The turning point lies in allowing full flexibility for the index \(j_k\leq k\) in \cref{prop:lammin}, enabling us to safeguard stepsizes that depend on a window of past iterations, as will be showcased in \cref{sec:stepsizes}.

\subsection{Preliminary results on adaptive methods}
	\begin{fact}[{\cite[Lem. 3.3]{oikonomidis2024adaptive}}]%
		Let \(f\) and \(g\) be convex, \(\rp>0\) and \(\x^\star\in\argmin\varphi\).
		Relative to \(\seq{\xk}\) generated by proximal gradient iterations \eqref{eq:PG} with stepsizes \(\gamk>0\) \(\forall k\in\N\), let
		\begin{equation}
			\Uk(\x^\star)
		\coloneqq
			\tfrac{1}{2}\norm{\xk-\x^\star}^2
			+
			\tfrac{1}{2}\norm{\xk-\x^{k-1}}^2
			+
			\gamk(1+\rp\rhok)P_{k-1}.
		\label{eq:Uk}
		\end{equation}
		Then, for every \(k\geq1\) and with \(\ell_k\) and \(L_k\) as in \eqref{eq:lL_noq},
		\begin{align*}
			\Uk*(\x^\star)
		\leq{} &
			\Uk(\x^\star)
			-
			\gamk\bigl(1+\rp\rhok-\rp\rhok*^2\bigr)P_{k-1}
		\\
		&
			-
			\Bigr\{
				\tfrac12
				-
				\rhok*^2\bigl[
					\gamk^2L_k^2
					-
					\gamk\ell_k(2-\rp)
		\\
		&
			\hspace*{2.0cm}
					+
					1-\rp
				\bigr]
			\Bigr\}
			\norm{\xk-\x^{k-1}}^2.
		\numberthis\label{eq:SD}
		\end{align*}
	\end{fact}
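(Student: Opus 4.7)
My plan is to mimic the Lyapunov/three-point analysis that has become standard for adaptive proximal gradient methods, interpreting $U_k$ as a merit function combining the usual primal gap $\tfrac12\|x^k-x^\star\|^2$, the fixed-point residual $\tfrac12\|x^k-x^{k-1}\|^2$, and a booster term tied to the past suboptimality $P_{k-1}$. The goal is to derive a per-iteration recursion $U_{k+1}\le U_k - A_k P_{k-1} - B_k\|x^k-x^{k-1}\|^2$ with $A_k = \gamma_k(1+\pi\rho_k-\pi\rho_{k+1}^2)$ and $B_k$ equal to the stated bracketed expression.

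First I would extract from the prox step $x^{k+1}=\prox_{\gamma_{k+1}g}(x^k-\gamma_{k+1}\nabla f(x^k))$ the inclusion $\tfrac{x^k-x^{k+1}}{\gamma_{k+1}}-\nabla f(x^k)\in\partial g(x^{k+1})$, and use convexity of $g$ at $z=x^\star$ together with the identity
\[
2\innprod{x^k-x^{k+1}}{x^{k+1}-x^\star}=\norm{x^k-x^\star}^2-\norm{x^{k+1}-x^\star}^2-\norm{x^k-x^{k+1}}^2
\]
to obtain a preliminary descent inequality for $\tfrac12\|x^{k+1}-x^\star\|^2$ in terms of $\gamma_{k+1}(g(x^\star)-g(x^{k+1}))$ and an inner product with $\nabla f(x^k)$. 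The convexity of $f$ at $x^\star$ then converts the $\nabla f(x^k)$ contribution into the primal gap $\gamma_{k+1}(f(x^\star)-\varphi(x^{k+1}))+\gamma_{k+1}\innprod{\nabla f(x^k)}{x^{k+1}-x^k}$, leaving the cross-term $\gamma_{k+1}\innprod{\nabla f(x^k)}{x^{k+1}-x^k}$ as the key quantity to manipulate.

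The core algebraic trick is to split this cross-term as
\[
\gamma_{k+1}\innprod{\nabla f(x^k)-\nabla f(x^{k-1})}{x^{k+1}-x^k}+\gamma_{k+1}\innprod{\nabla f(x^{k-1})}{x^{k+1}-x^k}
\]
and bound the first summand by Young's inequality, which introduces $L_k\|x^k-x^{k-1}\|\cdot\|x^{k+1}-x^k\|$ and thus the $\gamma_k^2 L_k^2$ contribution after weighting by $\rho_{k+1}^2=\gamma_{k+1}^2/\gamma_k^2$. The second summand is absorbed by invoking the prox inequality at the \emph{previous} iteration (subgradient of $g$ at $x^k$ tested against $x^{k+1}$), producing the $\gamma_{k+1}\ell_k$-type term via convexity of $f$ between $x^k$ and $x^{k-1}$ and an application of the three-point identity on $\|x^k-x^{k-1}\|^2$, $\|x^{k+1}-x^{k-1}\|^2$, $\|x^{k+1}-x^k\|^2$. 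Collecting all occurrences of $\|x^{k+1}-x^k\|^2$ rebuilds the residual piece of $U_{k+1}$, while the suboptimality terms regroup into $\gamma_{k+1}(1+\pi\rho_{k+1})P_k$ on the left and $-A_k P_{k-1}$ on the right once one uses $\rho_{k+1}=\gamma_{k+1}/\gamma_k$ to match coefficients.

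The main obstacle, as usual in these analyses, is bookkeeping rather than conceptual: tracking how Young's parameter is tied to $\rho_{k+1}$ so that the residual $\|x^{k+1}-x^k\|^2$ coefficient lands cleanly at $\tfrac12$, and ensuring that the surplus contributions involving $\ell_k$ and $L_k$ aggregate into exactly the bracket $\gamma_k^2L_k^2-\gamma_k\ell_k(2-\pi)+1-\pi$ with the correct $\rho_{k+1}^2$ prefactor. I expect that a careful choice of Young's inequality weight depending on $\pi$ is what produces the $(2-\pi)\gamma_k\ell_k$ and the $1-\pi$ constant; this is the only delicate bit, whereas the rest is rearrangement into the template of $U_{k+1}$.
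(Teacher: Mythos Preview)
The paper does not actually prove this statement: it is recorded as a \emph{Fact} and attributed verbatim to \cite[Lem.~3.3]{oikonomidis2024adaptive}, with no argument reproduced here. So there is no in-paper proof to compare your proposal against.

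That said, your outline is the standard three-point/Lyapunov derivation used in \cite{latafat2023adaptive,latafat2023convergence,oikonomidis2024adaptive}: prox inclusion at step $k{+}1$ tested against $x^\star$, three-point identity, convexity of $f$ at $x^\star$, then the cross-term $\gamma_{k+1}\innprod{\nabla f(x^k)}{x^{k+1}-x^k}$ handled by inserting $\nabla f(x^{k-1})$ and using the prox inclusion at step $k$ tested against $x^{k+1}$. One refinement worth flagging: the $(2-\rp)\gamma_k\ell_k$ and $1-\rp$ contributions do not come from a $\rp$-dependent Young weight as you suggest. In the cited derivations the gradient-difference term is first shifted,
\[
\innprod{\nabla f(x^k)-\nabla f(x^{k-1})}{x^{k+1}-x^k}
=
\innprod{\nabla f(x^k)-\nabla f(x^{k-1})}{x^{k+1}-x^{k-1}}
-\ell_k\norm{x^k-x^{k-1}}^2,
\]
which extracts an exact $\ell_k$ contribution before Cauchy--Schwarz/Young is applied; the parameter $\rp$ then enters as the coefficient of a convex-combination between two resulting bounds on $P_k$ (one via $x^\star$, one via $x^{k-1}$), which is what produces the $(1+\rp\rho_k)$ weight in $\Uk$ and the matching $(2-\rp)$ and $1-\rp$ constants. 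Your bookkeeping will close once you make that split explicit rather than hiding it in a Young weight.
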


	\begin{fact}[{\cite[Lem. 3.4]{oikonomidis2024adaptive}}]\label{thm:P1P2}%
		Let \cref{ass:basic} hold with \(\q\in[0,1]\), \(\x^\star\in\argmin\varphi\), and \(\seq{\xk}\) be generated by proximal gradient iterations \eqref{eq:PG} with stepsizes \(\gamk>0\) \(\forall k\in\N\).
		If \(\seq{\xk}\) and \(\seq{\gamk}\) satisfy \cref{prop:adaPG1,prop:adaPG2}, then:
		\begin{enumerate}
		\item \label{thm:descent}%
			\(\seq{\U_k(\x^\star)}\) decreases and converges to a finite value.

		\item \label{thm:bounded}%
			The sequence \(\seq{\xk}\) is bounded and admits at most one optimal limit point.

		\item \label{thm:sumgamk}%
			\(P_K^{\rm min}\leq\U_1(\x^\star)\big/\bigl(\sum_{k=1}^{K+1}\gamk\bigr)\) for any \(K\geq1\).
		\end{enumerate}
	\end{fact}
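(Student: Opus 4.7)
The plan rests on the estimate \eqref{eq:SD}. The coefficients $\gamk(1+\rp\rhok-\rp\rhok*^2)$ and $\tfrac12-\rhok*^2[\gamk^2L_k^2-\gamk\ell_k(2-\rp)+1-\rp]$ subtracted on its right-hand side are non-negative precisely under \cref{prop:adaPG1} and \cref{prop:adaPG2}, and $P_{k-1}\geq 0$, so $\Uk*(\x^\star)\leq\Uk(\x^\star)$. Since every term in \eqref{eq:Uk} is non-negative (in particular $1+\rp\rhok>0$ as $\rhok>0$ and $\rp\in[1,2]$), the monotone sequence $\seq{\Uk(\x^\star)}$ is bounded below by $0$ and hence converges, proving item~\ref{thm:descent}.

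Item~\ref{thm:bounded} then splits in two. Boundedness is immediate from $\tfrac12\norm{\xk-\x^\star}^2\leq\Uk(\x^\star)\leq\U_1(\x^\star)$. For the uniqueness of an optimal cluster point, I would exploit that \eqref{eq:SD} holds for \emph{every} $\x^\star\in\argmin\varphi$, so that $\Uk(\x^\star)$ converges for each such choice. Since only the term $\tfrac12\norm{\xk-\x^\star}^2$ in \eqref{eq:Uk} depends on $\x^\star$, the difference $\Uk(\bar\x)-\Uk(\bar y)$ between two optima $\bar\x,\bar y\in\argmin\varphi$ collapses to the affine expression $\innprod{\xk}{\bar y-\bar\x}+\tfrac12(\norm{\bar\x}^2-\norm{\bar y}^2)$; as both $\Uk(\bar\x)$ and $\Uk(\bar y)$ converge, $\innprod{\xk}{\bar y-\bar\x}$ admits a limit, and evaluating this limit along subsequences converging respectively to $\bar\x$ and to $\bar y$ forces $\innprod{\bar y-\bar\x}{\bar y-\bar\x}=0$, i.e., $\bar\x=\bar y$.

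For the rate in item~\ref{thm:sumgamk}, I would telescope \eqref{eq:SD} from $k=1$ to $K$, discarding the non-negative quadratic residual, to arrive at $\Uk*(\x^\star)+\sum_{k=1}^K A_k P_{k-1}\leq\U_1(\x^\star)$ with $A_k\coloneqq\gamk(1+\rp\rhok-\rp\rhok*^2)$. The identity $\gamk\rhok*^2=\gamk*\rhok*$ lets me rewrite $A_k=\gamk+\rp(\gamk\rhok-\gamk*\rhok*)$, which telescopes. Combining with the complementary lower bound $\Uk*(\x^\star)\geq\gamk*(1+\rp\rhok*)P_K$ keeps the otherwise pesky $\rp\gamk*\rhok*$ term exactly in balance, and minorising every $P_{k-1}$ and $P_K$ by $P_K^{\rm min}$ yields $P_K^{\rm min}\bigl(\sum_{k=1}^{K+1}\gamk+\rp\gam_1\rho_1\bigr)\leq\U_1(\x^\star)$, whence the stated rate.

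The subtle step I would anticipate is the uniqueness in item~\ref{thm:bounded}: plain Fejér monotonicity of $\norm{\xk-\x^\star}$ is not directly at hand, since $\Uk$ carries the residual $\tfrac12\norm{\xk-\x^{k-1}}^2$ and the gap $\gamk(1+\rp\rhok)P_{k-1}$, which need not vanish in the limit. The remedy is the linearity of $\Uk(\cdot)$ in $\x^\star$ highlighted above, which bypasses any Opial-type argument and lets one compare two distinct optima directly.
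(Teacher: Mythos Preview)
Your argument is correct and is precisely the content that the paper defers to: the paper's own proof is a one-liner observing that \cref{prop:adaPG1,prop:adaPG2} are equivalent to the stepsize bound \(\gamk*\leq\gamk\min\{\cdot,\cdot\}\) and then citing \cite[Lem.~3.4]{oikonomidis2024adaptive} verbatim, whose proof proceeds exactly as you describe---monotonicity of \(\Uk\) from \eqref{eq:SD}, boundedness from the first term of \(\Uk\), and the rate via the telescoping identity \(\gamk(1+\rp\rhok-\rp\rhok*^2)=\gamk+\rp(\gamk\rhok-\gamk*\rhok*)\) combined with \(\Uk*(\x^\star)\geq\gamk*(1+\rp\rhok*)P_K\). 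Your uniqueness argument via the affine dependence of \(\Uk(\cdot)\) on \(\x^\star\) is a clean variant of the quasi-Fej\'er reasoning used there and works without issue.
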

	\begin{proof}
		The proof is identical to that of \cite[Lem. 3.4]{oikonomidis2024adaptive}, after observing that \cref{prop:adaPG1,prop:adaPG2} together amount to
		\[
			\gamk*
		\leq
			\gamk\min\set{
				\sqrt{\tfrac{1}{\rp}+\tfrac{\gamk}{\gam_{k-1}}},
				\tfrac{1}{
					\sqrt{2\left[\gamk^2L_k^2-(2-\rp)\gamk\ell_k+1-\rp\right]_+}
				}
			}.
		\]
	\end{proof}

	\begin{lemma}[compliance of \adaPG]\label{thm:properties}%
		Let \cref{ass:basic} hold with \(\q\in[0,1]\).
		Then, \(\seq{\xk}\) and \(\seq{\gamk}\) generated by \adaPG\ (\eqref{eq:PG} and \eqref{eq:adaPG}) satisfy \cref{prop:adaPG1,prop:adaPG2,prop:lammin} with
		\[
			j_k\in\set{k-1,k}
		\quad\text{and}\quad
			\lam_{\rm min}=\tfrac{1}{\sqrt{2\rp}\L_\Omega},
		\]
		where \(\L_\Omega\) is a \(\q\)-H\"older modulus for \(\nabla f\) on a compact and convex set \(\Omega\) that contains all the iterates \(\xk\).
	\end{lemma}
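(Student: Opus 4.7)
The plan is to verify each property in turn. \cref{prop:adaPG1,prop:adaPG2} are encoded directly in the \(\min\) defining \(\gamk*\) in~\eqref{eq:adaPG}: rearranging the two inequalities \(\gamk*\leq\gamk\sqrt{\tfrac{1}{\rp}+\rhok}\) and \(\gamk*\leq\gamk/\sqrt{2[\gamk^2L_k^2-(2-\rp)\gamk\ell_k+1-\rp]_+}\) immediately yields \cref{prop:adaPG1} and \cref{prop:adaPG2}, respectively. Having those, \cref{thm:P1P2}\ref{thm:bounded} confines the iterates to a compact convex \(\Omega\subset\R^n\), so \(\L_\Omega\) is well defined and \cref{eq:lLL} gives \(\lk,\Lk\leq\L_\Omega\). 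For \cref{prop:lammin}, I assume \(\gamk*<\gamk\) (else the first disjunct suffices) and denote by \(B_k\coloneqq\gamk^2L_k^2-(2-\rp)\gamk\ell_k+1-\rp\) the bracket in~\eqref{eq:adaPG}. Convexity of \(f\) gives \(\ell_k\geq0\), and \(\rp\geq1\) makes the last two summands non-positive, whence \(B_k\leq\gamk^2L_k^2=\lamk^2\Lk^2\leq\lamk^2\L_\Omega^2\); two cases then arise.

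\textbf{Case A} (\(B_k>\tfrac12\)): the chain above forces \(\lamk>\tfrac{1}{\sqrt{2}\,\L_\Omega}=\sqrt{\rp}\,\lam_{\rm min}\), so \(\gamk\geq\lam_{\rm min}\norm{\xk-x^{k-1}}^{1-\q}\). Both expressions inside the \(\min\) of~\eqref{eq:adaPG} can likewise be bounded below by \(\lam_{\rm min}\norm{\xk-x^{k-1}}^{1-\q}\): the first via \(\sqrt{\tfrac{1}{\rp}+\rhok}\geq\tfrac{1}{\sqrt{\rp}}\) combined with \(\lamk>\sqrt{\rp}\,\lam_{\rm min}\), the second via \(\sqrt{2B_k}\leq\sqrt{2}\,\lamk\L_\Omega\). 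Their minimum \(\gamk*\) inherits the same lower bound, so \(j_k=k\) fulfills \cref{prop:lammin}.

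\textbf{Case B} (\(B_k\leq\tfrac12\)): the second expression in~\eqref{eq:adaPG} is \(\geq\gamk\), so \(\gamk*\) must equal the first and \(\gamk*<\gamk\) forces \(\rhok<\tfrac{\rp-1}{\rp}<1\), in particular \(\gamk<\gam_{k-1}\). The key claim is that \(\gamk\) must be the \emph{second} term of~\eqref{eq:adaPG} applied at iteration \(k-1\), with \(B_{k-1}>\tfrac12\). Otherwise \(\gamk=\gam_{k-1}\sqrt{\tfrac{1}{\rp}+\rho_{k-1}}\), which gives \(\rhok^2=\tfrac{1}{\rp}+\rho_{k-1}\); combined with \(\rhok<\tfrac{\rp-1}{\rp}\), this yields \(\rho_{k-1}<\tfrac{(\rp-1)^2-\rp}{\rp^2}\), a quantity strictly negative on \([1,2]\) since \(\rp^2-3\rp+1<0\) throughout that interval, contradicting \(\rho_{k-1}>0\). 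Hence \(\gamk=\gam_{k-1}/\sqrt{2B_{k-1}}\) with \(B_{k-1}>\tfrac12\), and the Case A analysis run at iteration \(k-1\) delivers \(\gam_{k-1}\geq\lam_{\rm min}\norm{x^{k-1}-x^{k-2}}^{1-\q}\) together with \(\gamk\geq\sqrt{\rp}\,\lam_{\rm min}\norm{x^{k-1}-x^{k-2}}^{1-\q}\); the crude bound \(\gamk*\geq\gamk/\sqrt{\rp}\) then yields \(\gamk*\geq\lam_{\rm min}\norm{x^{k-1}-x^{k-2}}^{1-\q}\), so \(j_k=k-1\) works.

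The main obstacle is Case B: one has to simultaneously rule out two consecutive ``small-bracket'' iterations and force the predecessor to activate its second (curvature) branch rather than the first (momentum) one. Both reductions boil down to the sign of \((\rp-1)^2-\rp\), whose negativity on \(\rp\in[1,2]\) is precisely what closes the argument, neatly matching the admissible parameter range of the framework.
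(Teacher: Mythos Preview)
Your proof is correct and follows essentially the same strategy as the paper's: once \cref{prop:adaPG1,prop:adaPG2} are read off from~\eqref{eq:adaPG} and boundedness is invoked via \cref{thm:P1P2}, the remaining case split (your Case~A/B on whether \(B_k>\tfrac12\), versus the paper's split on which branch of the \(\min\) is active) leads to the same conclusion through the same key step, namely ruling out two consecutive ``momentum'' decreases. Your algebraic contradiction \(\rho_{k-1}<\frac{(\rp-1)^2-\rp}{\rp^2}<0\) is equivalent to the paper's \(\sqrt{\tfrac{1}{\rp}+\sqrt{\tfrac{1}{\rp}+\rho_{k-1}}}\geq\sqrt{\tfrac12+\tfrac{1}{\sqrt2}}>1\); the only cosmetic gap is that your ``Otherwise'' in Case~B tacitly skips the sub-case ``second term active with \(B_{k-1}\leq\tfrac12\)'', but that is immediately excluded by \(\gam_k<\gam_{k-1}\).
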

	\begin{proof}
		Compliance with \cref{prop:adaPG1,prop:adaPG2}, as well as the existence of such an \(\Omega\) and the consequent finiteness of \(\L_\Omega\), follows from \cref{thm:P1P2}.
		Recall in particular that \(\lk\leq\Lk\leq\L_\Omega\) holds for every \(k\), cf. \eqref{eq:lLL}.
		Suppose that \(\gamk*<\gamk\) at some iteration \(k\), and let \(K_i\subseteq\N\) denote the set of iterates \(k\) for which the \(i\)-th element in the minimum of \eqref{eq:adaPG} is active, \(i=1,2\).
		We have two cases:
		\begin{proofitemize}[topsep=3pt]
		\item
			If \(k\in K_2\), then
			\begin{align*}
				\gamk
			>{} &
				\gamk*
			=
				\tfrac{\gamk}{
					\sqrt{
						2\left[\gamk^2L_k^2-(2-\rp)\gamk\ell_k - (\rp-1)\right]
					}
				}
			\\
			\geq{} &
				\tfrac{1}{\sqrt{2}L_k}
			=
				\tfrac{\norm{\xk-\x^{k-1}}^{1-\q}}{\sqrt{2}\L_k}
			\geq
				\tfrac{\norm{\xk-\x^{k-1}}^{1-\q}}{\sqrt{2}\L_\Omega}.
			\numberthis\label{eq:lamminK2}
			\end{align*}
			Noticing that \(\frac{1}{\sqrt{2}\L_\Omega}\geq\lam_{\rm min}\) (since \(\rp\geq1\)), apparently the claimed \cref{prop:lammin} holds with \(j_k=k\).

		\item
			If \(k\in K_1\) (necessarily \(\rp>1\)), then \(1>\rhok*^2=\frac{1}{\rp}+\rhok\), which yields that \(\rhok<1-\frac{1}{\rp}<1\) and that consequently \(\gamk*<\gamk<\gam_{k-1}\).
			Necessarily \(k-1\in K_2\), for otherwise, denoting \(t=\rp^{-1}\geq\frac12\), it would hold that \(\rhok*=\sqrt{t+\rhok}=\sqrt{t+\sqrt{t+\rho_{k-1}}}\geq\sqrt{\nicefrac{1}{2}+\nicefrac{1}{\sqrt{2}}}>1\).
			Thus,
			\[
				\gam_{k-1}
			>
				\gamk*
			=
				\gamk
				\sqrt{\tfrac{1}{\rp}+\rhok}
			\geq
				\tfrac{1}{\sqrt{\rp}}
				\gamk
			\overrel[>]{\eqref{eq:lamminK2}}
				\tfrac{\norm{\x^{k-1}-x^{k-2}}^{1-\q}}{\sqrt{2\rp}\L_\Omega}.
			\]
			In this case, the same conclusion holds with \(j_k=k-1\).
		\qedhere
		\end{proofitemize}
	\end{proof}

%

\subsection{Convergence analysis}
	We now present the main result of this paper, which generalizes and improves upon the analysis of \cite{oikonomidis2024adaptive}.
	To minimize overlapping arguments, proofs of intermediate claims that are verbatim identical are deferred to the reference.

	\begin{theorem}\label{thm:convergence}%
		Let \cref{ass:basic} hold (with \(\q>0\)), and let \(\seq{\xk}\) be generated by proximal gradient iterations \eqref{eq:PG}.
		If all \cref{prop:adaPG1,prop:adaPG2,prop:lammin} hold for some \(\rp\in[1,2]\) and \(\lam_{\rm min}>0\), then \(\seq{\xk}\) converges to some \(\x^\star \in \argmin \varphi\) with
		\[
			P_K^{\rm min}
		\leq
			\max\set{
				\tfrac{\U_1(\x^\star)}{\gam_0(K+1)},
				\tfrac{
					2^{\frac{1-\q}{2}}
					\U_1(\x^\star)^{\frac{1+\q}{2}}
					(1+\lam_{\rm min}\L_{\Omega})^{1-\q}
				}{
					\lam_{\rm min}
					(K+1)^{\q}
				}
			}
		\]
		for every \(K\geq1\), where \(\U(\x^\star)\) is as in \eqref{eq:Uk} and \(\L_{\Omega}\) is a \(\q\)-H\"older modulus for \(\nabla f\) on a convex and compact set \(\Omega\) that contains all the iterates \(\xk\).
	\end{theorem}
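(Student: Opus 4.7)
The argument splits into obtaining the rate on $P_K^{\rm min}$ and a subsequent upgrade to full sequential convergence of $\seq{\xk}$.

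\emph{Step 1 --- global estimates from \cref{thm:P1P2}.}
Under \cref{prop:adaPG1,prop:adaPG2}, \cref{thm:P1P2} ensures that $\seq{\Uk(\x^\star)}$ is non-increasing (so $\Uk(\x^\star)\leq\U_1(\x^\star)$ for every $k$), that $\seq{\xk}$ lies in some compact convex $\Omega\subset\R^n$ on which $\nabla f$ admits a finite $\q$-H\"older modulus $\L_\Omega$ (so via \eqref{eq:lLL} one has $\lk\leq\Lk\leq\L_\Omega$ throughout), and that the basic estimate $P_K^{\rm min}\leq\U_1(\x^\star)/\sum_{k=1}^{K+1}\gamk$ holds.
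Telescoping the descent inequality \eqref{eq:SD} and discarding the nonnegative terms afforded by \cref{prop:adaPG1,prop:adaPG2} additionally provides control on the displacements $\norm{\xk-\x^{k-1}}$ in terms of $\U_1(\x^\star)$.

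\emph{Step 2 --- rate via \cref{prop:lammin}.}
It is enough to lower-bound $S_K\coloneqq\sum_{k=1}^{K+1}\gamk$.
If $\gamk\geq\gam_0$ for every $k\leq K+1$ then $S_K\geq\gam_0(K+1)$, which delivers the first term in the claimed maximum.
Otherwise, letting $k_\star$ denote an index realizing $\min_{k\leq K+1}\gamk$, one has $\gam_{k_\star}<\gam_{k_\star-1}$, and \cref{prop:lammin} applied at $k_\star-1$ produces some $j\leq k_\star-1$ with $\gam_{k_\star}\geq\lam_{\rm min}\norm{\x^j-\x^{j-1}}^{1-\q}$ (and analogously $\gam_j\geq\lam_{\rm min}\norm{\x^j-\x^{j-1}}^{1-\q}$).
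The task is to promote this single-index inequality to a lower bound on $S_K$ of order $\lam_{\rm min}(K+1)^{\q}\U_1(\x^\star)^{(1-\q)/2}/(1+\lam_{\rm min}\L_\Omega)^{1-\q}$.
I anticipate using H\"older's inequality with conjugate exponents $\tfrac{2}{1-\q}$ and $\tfrac{2}{1+\q}$ against the displacement control from Step~1 to extract the $\U_1(\x^\star)^{(1+\q)/2}$ prefactor, while the constant $(1+\lam_{\rm min}\L_\Omega)^{1-\q}$ should emerge by coupling the growth constraint $\gamk*\leq\gamk\sqrt{\tfrac{1}{\rp}+\rhok}$ equivalent to \cref{prop:adaPG1} with the H\"older bound $\Lk\leq\L_\Omega$, which caps how rapidly $\seq{\gamk}$ can evolve between consecutive drops and thus interlocks the ``increasing'' and ``decreasing'' regimes.

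\emph{Step 3 --- sequential convergence.}
Having shown $P_K^{\rm min}\to0$, boundedness of $\seq{\xk}$ from \cref{thm:P1P2} yields, along a minimizing subsequence, a cluster point $\bar\x\in\argmin\varphi$ by lsc of $\varphi$; such a cluster point is unique by the same fact.
Since $\seq{\Uk(\bar\x)}$ is non-increasing and bounded below it converges; the displacement control of Step~1 and $P_{k-1}\to0$ force the last two terms in \eqref{eq:Uk} to vanish, so $\norm{\xk-\bar\x}$ converges, and together with the subsequential limit along $\bar\x$ this forces $\xk\to\bar\x$.

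\emph{Main obstacle.}
The decisive technical step is Step~2: the index $j$ furnished by \cref{prop:lammin} is ``floating'' and potentially far in the past, so upgrading the single-index lower bound on $\gam_{k_\star}$ into a uniform bound on $S_K$ with the precise constants demands careful bookkeeping of how the increasing and decreasing regimes of $\seq{\gamk}$ dovetail --- in particular, the H\"older-inequality balancing between displacement control and the H\"older modulus $\L_\Omega$ that yields precisely the exponents $(1+\q)/2$ and $1-\q$.
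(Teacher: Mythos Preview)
Your Step~1 and the general architecture are fine, but Step~2 contains a genuine gap: the mechanism you propose for lower-bounding \(S_K\) goes in the wrong direction and misses the key idea.  From \cref{prop:lammin} you obtain \(\gam_{k_\star}\geq\lam_{\rm min}\norm{\x^{j}-\x^{j-1}}^{1-\q}\), and then you intend to invoke ``displacement control'' from telescoping \eqref{eq:SD}.  But telescoping only yields \emph{upper} bounds on \(\norm{\x^k-\x^{k-1}}\); chaining an upper bound through an inequality \(\gam_{k_\star}\geq\lam_{\rm min}\norm{\cdot}^{1-\q}\) gives nothing.  A H\"older/Jensen argument with exponents \(\tfrac{2}{1-\q},\tfrac{2}{1+\q}\) likewise produces \(\sum_k\norm{\x^k-\x^{k-1}}^{1-\q}\leq K^{(1+\q)/2}\bigl(\sum_k\norm{\x^k-\x^{k-1}}^2\bigr)^{(1-\q)/2}\), again an upper bound.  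Moreover, a direct estimate \(S_K\geq C(K+1)^\q\) with \(C\) independent of \(P_K^{\rm min}\) (which your target inequality implicitly seeks, and with \(\U_1^{(1-\q)/2}\) on the wrong side) is not available: near optimality the displacements, and hence the stepsizes through \cref{prop:lammin}, can be arbitrarily small.

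What is missing is the \emph{lower} bound on the displacement furnished by the proximal-gradient optimality condition: for every \(k\),
\[
  P_k^{\rm min}
  \;\leq\;
  \norm{\x^k-\x^\star}\Bigl(\tfrac{1}{\gamk}\norm{\x^{k-1}-\x^k}+\norm{\nabla f(\x^{k-1})-\nabla f(\x^k)}\Bigr)
  \;\leq\;
  D\,\tfrac{1+\lamk\L_\Omega}{\lamk}\,\norm{\x^k-\x^{k-1}}^{\q},
\]
with \(D=\sqrt{2\U_1(\x^\star)}\).  Inverting this at the index \(j_k\) supplied by \cref{prop:lammin} (where \(\lam_{j_k}\geq\lam_{\rm min}\)) turns the displacement lower bound into \(\gamk*\geq\lam_{\rm min}^{1/\q}\bigl(P_k^{\rm min}/(D(1+\lam_{\rm min}\L_\Omega))\bigr)^{(1-\q)/\q}\); the paper shows this holds for \emph{every} \(k\) past the first decrease (by tracing back to the most recent index in \(\K*\)).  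Summing and combining with \cref{thm:sumgamk} then closes a self-referential inequality in \(P_K^{\rm min}\), which is where the exponents \((1+\q)/2\) and \(1-\q\) and the factor \((1+\lam_{\rm min}\L_\Omega)^{1-\q}\) actually originate --- not from the growth cap in \cref{prop:adaPG1} as you suggest.  Your Step~3 also assumes \(P_{k-1}\to0\) along the full sequence, but only \(P_K^{\rm min}\to0\) has been shown; the paper instead establishes \(\inf_kP_k=0\) via a separate case analysis and then appeals to a standard Fej\'er-type argument.
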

	\begin{proof}
		The assumptions of \cref{thm:P1P2} are met, and therefore all the claims therein hold.
		In particular, the existence of a \(\q\)-H\"older modulus \(\L_{\Omega}\) for \(\nabla f\) on a convex and compact set \(\Omega\) that contains all the iterates \(\xk\) is guaranteed.
		\Cref{prop:adaPG1,prop:adaPG2} ensure that the coefficients of \(P_{k-1}\) and \(\norm{\x^k-\x^{k-1}}^2\) in \eqref{eq:SD} are negative.
		A telescoping argument then yields that
		\begin{equation}\label{eq:sumPk}
			\sum_{k\geq1}\gamk\bigl(1+\rp\rhok-\rp\rhok*^2\bigr)P_{k-1}
		<
			\infty.
		\end{equation}
		We next proceed by intermediate steps.
		In what follows, we denote \(\K\coloneqq\set{k\in\N}[\gamk*\geq\gamk]\) and \(\K*\coloneqq\N\setminus \K\).
		\def\currentlabel{thm:convergence}%
		\begin{claims}
		\item \label{thm:Pkto0}%
			{\em \(\inf_{k\in\N}P_k=0\), and \(\seq{\xk}\) admits a (unique) optimal limit point.}

			Boundedness of \(\seq{\xk}\) and at most uniqueness of the optimal limit point follow from \cref{thm:bounded}.
			By lower semicontinuity of \(\varphi\), in order to show existence it suffices to prove that \(\inf_{k\in\N}P_k=0\).
			If \(\gamk\not\to0\), then \(\sum_k\gamk=\infty\) and we know from \cref{thm:sumgamk} that \(P_k\to0\).
			Suppose instead that \(\gamk\to0\), so that in particular \(\K*\) must be infinite.
			If \(\K\) is finite, then \(1+\rp\rhok-\rp\rhok*^2\not\to 0\), for otherwise \(\seq{\gamk}\) would be eventually linearly increasing.
			If \(\K\) is infinite, then so is \(\tilde K\coloneqq\set{k\in \K*}[k-1\in \K]\) with \(\rhok\geq1\) and \(\rhok*<1\) for all \(k\in\tilde K\);
			in particular,
			\(
				1+\rp\rhok-\rp\rhok*^2
			\geq
				1
			\)
			for all \(k\in\tilde K\).
			Also in this case we conclude that \(0\leq1+\rp\rhok-\rp\rhok*^2\not\to 0\) as \(\K*\ni k\to\infty\), where the inequality is ensured by \cref{prop:adaPG1}.
			Then, (regardless of whether \(\K\) is finite or not) there exists \(\varepsilon>0\) together with an infinite set \(\hat K\subseteq \K*\) such that \(1+\rp\rhok-\rp\rhok*^2\geq\varepsilon\) holds for all \(k\in\hat K\).
			Since \(\hat K\subseteq\K*\), by virtue of \cref{prop:lammin} we then have that
			\(
				1+\rp\rhok-\rp\rhok*^2\geq\varepsilon
			\)
			and
			\(
				\gamk*
			\geq
				\lam_{\rm min}\norm{x^{j_k}-x^{j_k-1}}^{1-\q}
			\)
			hold
			\(\forall k\in\hat K\),
			where \(j_k\leq k\) for every \(k\in\hat K\).
			By combining with \eqref{eq:sumPk} we obtain
			\[
				\infty
			>
				\sum_{k\in\hat K}\gamk* P_k
			\geq
				\lam_{\rm min}\sum_{k\in\hat K}\norm{x^{j_k}-x^{j_k-1}}^{1-\q}P_{k}.
			\]
			Therefore, either \(\liminf_{\hat K\ni k\to\infty}\norm{x^{j_k}-x^{j_k-1}}^{1-\q}=0\) or \(\liminf_{\hat K\ni k\to\infty}P_{k}=0\).
			In the latter case, the claim is proven.
			In the former case, necessarily (\(\q<1\) and) the sequence \(\seq{j_k}[k\in\hat K]\) contains infinitely many indices (for otherwise \(\seq{\norm{x^{j_k}-x^{j_k-1}}}[k\in\hat K]\) would alternate between a finite number of nonzero elements).
			Up to extracting, we have that \(\lim_{\hat K\ni k\to\infty}\norm{x^{j_k}-x^{j_k-1}}=0\).
			For any \(\x^\star\in\argmin\varphi\) we thus have (recall that \(P_k=\varphi(\x^k)-\varphi(\x^\star)\))
			\begin{align*}
				P_{j_k}
			\leq{} &
				\innprod{x^{j_k}-\x^\star}{\tfrac{x^{j_k-1}-x^{j_k}}{\gam_{j_k}}-\bigl(\nabla f(x^{j_k-1})-\nabla f(x^{j_k})\bigr)}
			\\
			\leq{} &
				\norm{x^{j_k}-\x^\star}
				\Bigl(
					\tfrac{1}{\gam_{j_k}}
					\norm{x^{j_k-1}-x^{j_k}}
				\\
			&
			\numberthis\label{eq:Pkbound}
				\hphantom{
					\norm{x^{j_k}-\x^\star}
					\Bigl(
				}
					+
					\norm{\nabla f(x^{j_k-1})-\nabla f(x^{j_k})}
				\Bigr)
			\\
			\leq{} &
				\norm{x^{j_k}-\x^\star}
				\norm{x^{j_k-1}-x^{j_k}}^{\q}
				\tfrac{1+\lam_{\rm min}\L_\Omega}{\lam_{\rm min}}
			\quad
				\forall k\in\hat K.
			\end{align*}
			Since \(\q>0\), by taking the limit as \(\hat K\ni k\to\infty\) we obtain that \(\lim_{\hat K\ni k\to\infty}P_{j_k}=0\).

	%
	%
	%
		\item
			{\em \(\seq{\xk}\) converges to a solution.}

			Having shown the previous claim and since \(\sup_{k\in\N}\rhok<\infty\) (by \cref{prop:adaPG1}), the proof is the same as in \cite[Thm. 3.8]{oikonomidis2024adaptive}.
		\end{claims}
		To simplify the notation we let \(D\coloneqq\sqrt{2\U_1(\x^\star)}\).
		\begin{claims}[resume]
	%

		\item \label{thm:sublinear:Pk<Dk}%
			{\em
				\(
					P_k^{\rm min}
				\leq
					D\tfrac{1+\lamk\L_\Omega}{\lamk}
					\norm{\xk-\x^{k-1}}^{\q}
				\)
				holds for any \(k\in\N\).
			}

			See \cite[Claim 3.9(b)]{oikonomidis2024adaptive}.

		\item
			{\em
				For any \(k\in\N\) it holds that
				\[
					\gamk*
				\geq
					\begin{ifcases}
						\lam_{\rm min}^{\frac{1}{\q}}
						\Bigl(
							\tfrac{P_k^{\rm min}}{D(1+\lam_{\rm min}\L_{\Omega})}
						\Bigr)^{\hspace{-0.25em}\frac{1-\q}{\q}}
					&
						k \geq\min\K*
					\\[3pt]
						\gam_0
					\otherwise.
					\end{ifcases}
				\]
			}

			Suppose first that \(k\in\K*\) (in particular, \(k\geq\min\K*\)).
			Since \(\gamk*<\gamk\), it follows from \cref{prop:lammin} that
			\[
				\gamk*\geq\min\set{\gam_{j_k},\gamk*}
			\geq
				\lam_{\rm min}\norm{x^{j_k}-x^{j_k-1}}^{1-\q}
			\]
			holds for some \(j_k\leq k\), and in particular that \(\lam_{j_k}\geq\lam_{\rm min}\).
			Noticing that \(P_{j_k}^{\rm min}\geq P_k^{\rm min}\) since \(j_k\leq k\), the sought inequality follows by using the lower bound
			\[
				\norm{x^{j_k}-x^{j_k-1}}
			\geq
				\Bigl(\tfrac{\lam_{j_k}P_{j_k}^{\rm min}}{D(1+\lam_{j_k}\L_\Omega)}\Bigr)
				^{\!\!\frac{1}{\q}}
			\geq
				\Bigl(\tfrac{\lam_{\rm min}P_k^{\rm min}}{D(1+\lam_{\rm min}\L_\Omega)}\Bigr)
				^{\!\!\frac{1}{\q}}
			\]
			obtained from \cref{thm:sublinear:Pk<Dk} raised to the power
			\(\frac{1}{\q}\).

			Next, suppose that \(k\in\K\).
			Let
			\[
				\K*_{<k}
			\coloneqq
				\K*\cap\set{0,1,\dots,k-1}
			\]
			denote the (possibly empty) set of all iteration indices up to \(k-1\) in which the next stepsize is strictly smaller.

			If \(\K*_{<k} = \emptyset\), then \(\gamk\geq\gam_{k-1}\geq\dots\geq\gam_0\).

			Suppose instead that \(\K*_{<k}\neq\emptyset\), and let \(i_k\) denote its largest element:
			\[
				i_k
			\coloneqq
				\max \K*_{<k}
			=
				\max\set{i<k}[
					\gam_{i+1}
				<
					\gam_i
				]
			<
				k.
			\]
			Then,
			\begin{align*}
				\gamk*
			\geq
				\gam_{i_k+1}
			\geq{} &
				\lam_{\rm min}^{\frac{1}{\q}}
				\left(
					\tfrac{
						1
					}{
						D(1+\lam_{\rm min}\L_\Omega)
					}
					P_{i_k}^{\rm min}
				\right)^{\frac{1-\q}{\q}}
			\\
			\geq{} &
				\lam_{\rm min}^{\frac{1}{\q}}
				\left(
					\tfrac{
						1
					}{
						D(1+\lam_{\rm min}\L_\Omega)
					}
					P_k^{\rm min}
				\right)^{\frac{1-\q}{\q}}
			\end{align*}
			where the second inequality follows from the fact that \(i_k\in\K*\), and the last one from the fact that \(\seq{P_k^{\rm min}}\) is decreasing (note that \(k>i_k\)).
		\end{claims}

		The sum of stepsizes can then be lower bounded by
		\begin{align*}
			\sum_{k=1}^{K+1} \gamk
		\geq{} &
			\sum_{k=0}^K
			\min\set{
				\gam_0,
				\lam_{\rm min}^{\frac{1}{\q}}
				\Bigl(
					\tfrac{P_k^{\rm min}}{D(1+\lam_{\rm min}\L_{\Omega})}
				\Bigr)^{\hspace{-0.25em}\frac{1-\q}{\q}}
			}
		\\
		\geq{} &
			(K+1)\min\set{
				\gam_0,
				\lam_{\rm min}^{\frac{1}{\q}}
				\Bigl(
					\tfrac{P_K^{\rm min}}{D(1+\lam_{\rm min}\L_{\Omega})}
				\Bigr)^{\hspace{-0.25em}\frac{1-\q}{\q}}
			},
		\end{align*}
		where the last inequality uses the fact that \(P_K^{\rm min}\leq P_k^{\rm min}\) for every \(k\leq K\).
		Therefore, in light of \cref{thm:sumgamk} we have
		\[
			\U_1(\x^\star)
		\geq
			(K+1)\min\set{
				\gam_0P_K^{\rm min},~
				\tfrac{
					(\lam_{\rm min}P_K^{\rm min})^{\nicefrac{1}{\q}}
				}{
					\left(D(1+\lam_{\rm min}\L_{\Omega})\right)^{\frac{1-\q}{\q}}
				}
			}.
		\]
		From \(D\coloneqq\sqrt{2\U_1(\x^\star)}\), the claimed bound follows.
	\end{proof}

	Neglecting the first term for simplicity, from \cref{thm:properties} we obtain the sublinear rate
	\begin{equation}\label{eq:newrate}
		P_K^{\rm min}
	\leq
		(1+\sqrt{2\rp})^{1-\q}
		\frac{
			\sqrt{2}\sqrt{\rp}^{\q}
			\U_1(\x^\star)^{\frac{1+\q}{2}}
			\L_{\Omega}
		}{(K+1)^\q}
	\end{equation}
	for plain \adaPG, improving the estimate
	\[
		P_K^{\rm min}
	\leq
		(1+\sqrt{2}\rho_{\rm max})^{1-\q}
		\frac{
			\sqrt{2}\sqrt{\rp}^{\q}
			\U_1(\x^\star)^{\frac{1+\q}{2}}
			\L_{\Omega}
		}{(K+1)^\q}
	\]
	with \(\rho_{\rm max}\coloneqq\frac{1+\sqrt{1+\frac{4}{\rp}}}{2}\) of \cite[Thm. 3.9]{oikonomidis2024adaptive}.

	\section{Choice of stepsizes}
		\label{sec:stepsizes}%
		In this section we translate the convergence recipe into sufficient conditions for ``fast'' stepsize choices \(\gamk*^{\fast}\) to be safeguardable by \adaPG.
To this end, we restrict our scrutiny to stepsizes oracles of the form
\[
	\gamk*^{\fast}=\Gamma^{\fast}(\x^{k-m},\dots,\x^k)
\]
for some \(m\geq1\) and \(\func{\Gamma^{\fast}}{(\R^n)^{m+1}}{\R_{++}}\).
The safeguard framework is elementary: take the minimum between the desired stepsize \(\gamk*^{\fast}\) and the \emph{safe} one of \adaPG:

\begin{algorithm}
	\caption{%
		Safeguard for stepsizes \(\mathtight\func{\Gamma^{\fast}}{(\R^n)^{m+1}}{\R_{++}}\)%
	}%
	\label{alg:safe}%
	\begin{algorithmic}[1]
\item[\hspace{7pt}{\sc Require}]
	\begin{tabular}[t]{@{}l@{}}
		\(\rp\in[1,2]\),~
		\(\x^0,\dots,\x^m\in\R^n\),~
		\(\gam_{m-1},\gam_m>0\)
	\end{tabular}
\item[\hspace{7pt}{\sc Repeat for }\(k=m,m+1,\dots\) until convergence]
	\State
		\!\!
		\(
			\gamk*^{\safe}
		=
			\min\set{\!\!
				\gamk\sqrt{\tfrac{1}{\rp}+\tfrac{\gamk}{\gam_{k-1}}},
				\tfrac{\gamk}{
					\sqrt{2\left[\gamk^2L_k^2-(2-\rp)\gamk\ell_k+1-\rp\right]_+}
				}
			\!\!}
		\)
	\State
		\!\!
		\(
			\gamk*^{\fast}
		=
			\Gamma^{\fast}(\x^{k-m},\dots,\xk)
		\)
	\State
		\!\!
		\(
			\gamk*
		=
			\min\set{\gamk*^{\safe},\gamk*^{\fast}}
		\)
	\State
		\!\!
		\(
			\xk*=\prox_{\gamk*g}(\xk-\gamk*\nabla f(\xk))
		\)
\end{algorithmic}

\end{algorithm}


\begin{lemma}[convergence of \cref{alg:safe}]\label{thm:safe}%
	Let \cref{ass:basic} hold for some \(\q\in(0,1]\), and let \(\func{\Gamma^{\fast}}{(\R^n)^{m+1}}{\R_{++}}\) for some \(m\geq1\).
	Suppose that for any compact \(\Omega\subset\R^n\) there exists \(\lam_{\Omega}>0\) with the property that whenever \(z^0,\dots,z^m\in\Omega\) there exists \(i\in\set{1,\dots,m}\) such that \(\Gamma^{\fast}(z^0,\dots,z^m)\geq\lam_{\Omega}\norm{z^i-z^{i-1}}^{1-\q}\).
	Then, denoting \(\rho_{\rm max}=\frac{1+\sqrt{1+\frac{4}{\rp}}}{2}\), the iterates generated by \cref{alg:safe} comply with the convergence recipe with \(\lam_{\rm min}=\min\set{\frac{1}{\sqrt{2\rp}\L_\Omega},\tfrac{\lam_{\Omega}}{\rho_{\rm max}^{m-1}}}\), and in particular \cref{thm:convergence} holds true.
\end{lemma}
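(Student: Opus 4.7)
The approach is to verify \cref{prop:adaPG1,prop:adaPG2,prop:lammin} for the iterates produced by \cref{alg:safe}, then invoke \cref{thm:convergence}. Since \(\gamk*=\min\{\gamk*^{\safe},\gamk*^{\fast}\}\leq\gamk*^{\safe}\), we have \(\rhok*\leq\rhok*^{\safe}\); because \(\gamk*^{\safe}\) matches the \adaPG{} update \eqref{eq:adaPG} and hence fulfils \cref{prop:adaPG1,prop:adaPG2} (cf.~the proof of \cref{thm:P1P2}), and since both properties remain valid under any decrease of \(\rhok*\), they continue to hold for \(\gamk*\). \Cref{thm:bounded} then yields a compact convex \(\Omega\) containing all iterates \(\xk\), on which \(\nabla f\) admits a \(\q\)-H\"older modulus \(\L_\Omega\) and the standing hypothesis supplies a constant \(\lam_\Omega>0\).

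For \cref{prop:lammin}, assume \(\gamk*<\gamk\) (otherwise the property is trivial) and consider two subcases. If the safeguard is active, that is \(\gamk*=\gamk*^{\safe}\), then \cref{thm:properties} itself furnishes an index \(j_k\in\{k-1,k\}\) together with the constant \(\tfrac{1}{\sqrt{2\rp}\L_\Omega}\). Otherwise \(\gamk*=\gamk*^{\fast}<\gamk*^{\safe}\); applying the standing assumption to the tuple \((x^{k-m},\ldots,x^k)\subseteq\Omega\) yields an index \(i\in\{1,\ldots,m\}\) with
\[
	\gamk*\geq\lam_\Omega\norm{x^{k-m+i}-x^{k-m+i-1}}^{1-\q},
\]
and we set \(j_k\coloneqq k-m+i\in\{k-m+1,\ldots,k\}\).

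To lower bound \(\gam_{j_k}\) in this remaining subcase, note that \(\rhok\leq\rho_{\rm max}\) for every \(k\) by an elementary induction exploiting \(\rhok*^{\safe}\leq\sqrt{1/\rp+\rhok}\) together with the fixed-point identity \(\rho_{\rm max}=\sqrt{1/\rp+\rho_{\rm max}}\). Crucially, in the present branch \(\rho_{k+1}=\gamk*/\gamk<1\), hence
\[
	\frac{\gamk*}{\gam_{j_k}}
	=\rho_{k+1}\,\rho_k\cdots\rho_{j_k+1}
	<\rho_{\rm max}^{k-j_k}
	\leq\rho_{\rm max}^{m-1},
\]
whence \(\min\{\gam_{j_k},\gamk*\}\geq\gamk*/\rho_{\rm max}^{m-1}\geq(\lam_\Omega/\rho_{\rm max}^{m-1})\norm{x^{j_k}-x^{j_k-1}}^{1-\q}\). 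Combining the two subcases, \cref{prop:lammin} holds with \(\lam_{\rm min}=\min\{\tfrac{1}{\sqrt{2\rp}\L_\Omega},\tfrac{\lam_\Omega}{\rho_{\rm max}^{m-1}}\}\), and \cref{thm:convergence} applies.

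The main obstacle is precisely the last chain of inequalities: obtaining the exponent \(m-1\) rather than the loose bound \(m\) hinges on exploiting \(\rho_{k+1}<1\), which is available only because we are in the branch \(\gamk*<\gamk\). This absorption of one factor of \(\rho_{\rm max}\) is what yields the sharper constant announced in the statement; without it, the constant would be strictly worse.
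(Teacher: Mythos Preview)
Your proof is correct and mirrors the paper's: same verification of \cref{prop:adaPG1,prop:adaPG2} from \(\gamk*\le\gamk*^{\safe}\), same boundedness step, and the same case split for \cref{prop:lammin} (invoke \cref{thm:properties} when the safeguard is active; otherwise use the \(\Gamma^{\fast}\) hypothesis together with \(\rho_i\le\rho_{\max}\)). The paper reaches the exponent \(m-1\) via the equivalent chain \(\gam_{j_k}\ge\gamk/\rho_{\max}^{k-j_k}\ge\gamk/\rho_{\max}^{m-1}\ge\gamk*/\rho_{\max}^{m-1}\), i.e.\ using \(k-j_k\le m-1\) directly and then \(\gamk\ge\gamk*\), so the ``absorption of one factor'' you emphasize is merely a regrouping of the same inequalities rather than an additional idea.
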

\begin{proof}
	\Cref{prop:adaPG1,prop:adaPG2} follow from \(\gamk*\leq\gamk*^{\safe}\).
	\Cref{thm:bounded} then ensures boundedness of \(\seq{\xk}\), whence the existence of a \(\lam_{\Omega}>0\) as in the statement.
	Thus, for every \(k\) there exists \(j_k\in\set{k-m+1,\dots,k}\) such that \(\gamk*^{\fast}\geq\lam_{\Omega}\norm{\x^{j_k}-\x^{j_{k-1}}}^{1-\q}\).
	If \(\gamk*<\gamk*^{\fast}\), \cref{prop:lammin} with \(\lam_{\rm min}=\frac{1}{\sqrt{2\rp}\L_\Omega}\) follows from \cref{thm:properties}.
	Otherwise,
	\[
		\gam_{j_k}
	\geq
		\tfrac{\gamk}{\rho_{\rm max}^{j_k-1}}
	\geq
		\tfrac{\gamk}{\rho_{\rm max}^{m-1}}
	\geq
		\tfrac{\gamk*}{\rho_{\rm max}^{m-1}}
	\geq
		\tfrac{\lam_{\Omega}}{\rho_{\rm max}^{m-1}}
		\norm{\x^{j_k}-\x^{j_{k-1}}}^{1-\q},
	\]
	with first inequality owing to \cref{prop:adaPG1}.
\end{proof}

We now provide some examples of stepsizes that well fit in the safeguarding framework of \cref{alg:safe}.
As will be evident from the experiment in \cref{sec:simulations}, we strongly advocate for the choice of the Anderson acceleration--like one described in \cref{sec:AA}, which dramatically boosts convergence speed in practice.
To simplify the notation, in what follows we denote \(s^k\coloneqq\x^k-\x^{k-1}\) and \(y^k\coloneqq\nabla f(\x^k)-\nabla f(\x)\).

\subsection{Long and short Barzilai-Borwein}\label{sec:step:BB}%
	When \(f\) satisfies \cref{ass:f} for some \(\q\in(0,1]\), these choices respectively correspond to
	\begin{equation}\label{eq:BB}
		\gamk*^{\BB}=\tfrac{1}{\ell_k}
	\quad\text{and}\quad
		\gamk*^{\BB*}=\tfrac{1}{c_k^\q L_k^{1-\q}}=\tfrac{1}{\sqrt{c_k^{1+\q}\ell_k^{1-\q}}}.
	\end{equation}
	It is immediate to verify that the long one complies with \cref{thm:safe}, having \(\ell_k=\lk\norm{\xk-\x^{k-1}}^{1-\q}\leq\L_{\Omega}\norm{\xk-\x^{k-1}}^{1-\q}\), see \eqref{eq:lL} and \eqref{eq:lLL}.
	For the short one, the involvement of a geometric average with the long one when \(\q<1\) is necessary to ensure compliance with \cref{thm:safe}, having
	\(
		\tfrac{1}{c_k}
	=
		\tfrac{\innprod{y^k}{s^k}}{\norm{y^k}^2}
	\propto
		\norm{y^k}^{\frac{1-\q}{\q}};
	\)
	see \cite[Fact 2.2.3]{oikonomidis2024adaptive} for the details.

%

\subsection{Martinez' rule for long and short BB}
	\BB{} and \BB* are one-dimensional equivalents of Broyden's ``good'' and ``bad'' quasi-Newton method, namely
	\[\mathtight
		\gamk*^{\BB}
	=
		\argmin_{\gam\in\R}\norm{\tfrac{s^k}{\gam}-y^k}^2
	~\text{and}~
		\gamk*^{\BB*}
	=
		\argmin_{\gam\in\R}\norm{\gam y^k-s^k}^2
	\]
	are chosen to minimize the secant and inverse secant quasi-Newton approximation \cite{martinez2000practical}.
	As suggested in \cite[\S3.1]{martinez2000practical}, we may choose between the two based on the rule
	\begin{equation}\label{eq:Martinez}
		\gamk*^{\algnamefont Martinez}
	=
		\begin{ifcases}
			\gamk*^{\BB} & \gamk>\frac{\innprod{s^k}{s^{k-1}}}{\innprod{y^k}{y^{k-1}}}
		\\
			\gamk*^{\BB*} \otherwise,
		\end{ifcases}
	\end{equation}
	which selects the one for which the inverse secant error on the previous pair is minimized.

\subsection{Least normalized secant error for long and short BB}
	We can take both direct and inverse errors into account.
	Relative to the previous pair \((s^{k-1},y^{k-1})\), we may opt for \BB{} when it has lower inverse secant error, and \BB* if this has lower secant error.
	If neither holds, we compare the respective scaled residuals, thereby selecting \BB{} when
	\begin{equation}\label{eq:LNSEleq}
		\norm{s^k}^{-1}\norm{s^k-\gamk*^{\BB}y^k}
	\leq
		\norm{y^k}^{-1}\norm{y^k-\nicefrac{s^k}{\gamk*^{\BB*}}}
	\end{equation}
	or else \BB*.
	The proposed rule boils down to
	\begin{equation}\label{eq:LNSE}
		\gamk*^{\algnamefont LNSE}
	=
		\begin{ifcases}
			\gamk*^{\BB} & \gamk*^{\BB}+\gamk*^{\BB*}\leq2\gamk^{\BB*}
		\\
			\gamk*^{\BB*} \otherwise[else if \(\frac{1}{\gamk*^{\BB}}+\frac{1}{\gamk*^{\BB*}}\geq\frac{2}{\gamk^{\BB}}\)]
		\\
			\gamk*^{\BB} \otherwise[else if \eqref{eq:LNSEleq}]
		\\
			\gamk*^{\BB*} \otherwise.
		\end{ifcases}
	\end{equation}

\subsection{Anderson acceleration}\label{sec:AA}%
	Keep the latest \(m\geq1\) pairs \(s^i\) and \(y^i\), and set
	\begin{equation}\label{eq:AA}
		\gamk*^{\AA_m}
	=
		\frac{\sum_{i=k-m+1}^k\innprod{s^i}{y^i}}{\sum_{i=k-m+1}^k\norm{y^i}^2}
	=
		\frac{\sum_{i=k-m+1}^k\norm{y^i}^2\frac{1}{c_i}}{\sum_{i=k-m+1}^k\norm{y^i}^2}.
	\end{equation}
	Apparently, this is a weighted average (with weights \(\norm{y^i}^2\)) of the most recent \BB* stepsizes, hence \cref{thm:safe} is verified when \cref{ass:basic} holds with \(\q=1\).
	When \(\q<1\), consistently with the discussion in \cref{sec:step:BB}, a suitable geometric averaging is needed.
	The name Anderson acceleration \cite{anderson1965iterative} is evocative of its multi-secant (inverse) update interpretation, see \cite{fang2009two} and \cite[\S 3.3.2]{themelis2022douglas}, as it is easily seen to equal
	\(
		\gamk*^{\AA}
	=
		\argmin_{\gam\in\R}\sum_{i=k-m+1}^k\norm{\gam y^i-s^i}^2
	\).

	\section{Numerical results}
		\label{sec:simulations}%
		We conducted a series of simulations to test the effectiveness of \adaPG{} safeguarding the five stepsizes listed in \cref{sec:stepsizes}.
The safeguarding of \adaPG{} is represented in the legend plots with ``{\algnamefont adaPG\(\wedge\)}''.
Our tests are based on the Julia code provided in \cite{latafat2023adaptive} on problems from the LIBSVM dataset \cite{chang2011libsvm}.
In all instances of \adaPG{} we use \(\rp=1.2\), consistently with its good performance reported in \cite{latafat2023convergence}, while for Anderson acceleration \eqref{eq:AA} a memory \(m=4\) is used.
The resulting methods are tested against \adaPG{} \eqref{eq:adaPG} and \adaPBB{} \cite[Alg. 3]{zhou2024adabb}.

We refer to \cite[\S4]{latafat2023adaptive} for a detailed account of the experiments, which are here reproduced with minimal variations.
In each figure, top plots report the best-so-far residual
\[
	r_k
=
	\norm{\tfrac{\xk-\x^{k-1}}{\gamk}-(\nabla f(\x^k)-\nabla f(\x^{k-1}))},
\]
and bottom ones the stepsize cumulative average \(\frac{1}{k}\sum_{i=1}^k\gam_i\).
On the \(x\)-axis we report the number of gradient evaluations, which also corresponds to the iteration count in all methods.

\begin{figure}
	\includetikz[width=\linewidth]{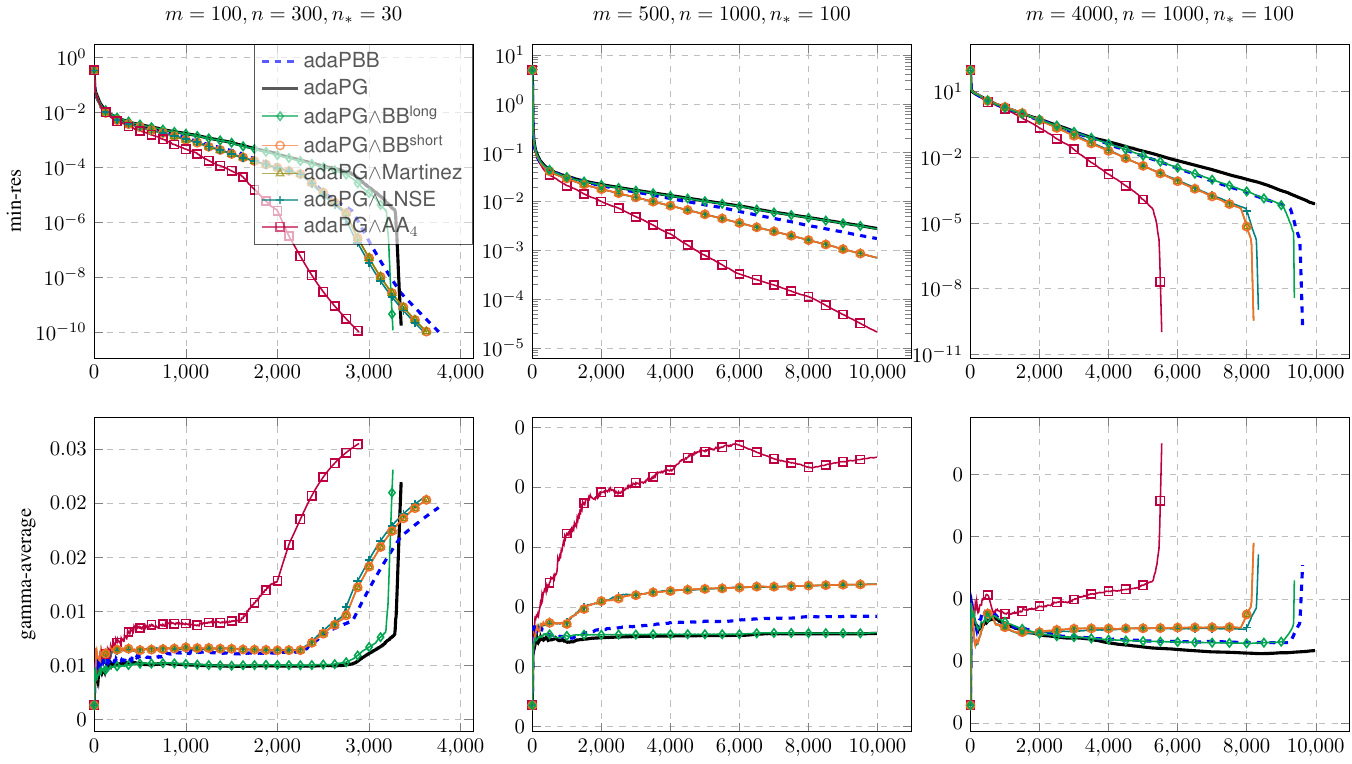}%
	\vspace{-0.75\baselineskip}%
	\caption{%
		Random lasso problem with \(\ell_1\)-regularization parameter \(\lamb=0.1\).
		\(n_\star\) represents the number of nonzero elements in the solution.
	}%
\end{figure}

\begin{figure}
	\includetikz[width=\linewidth]{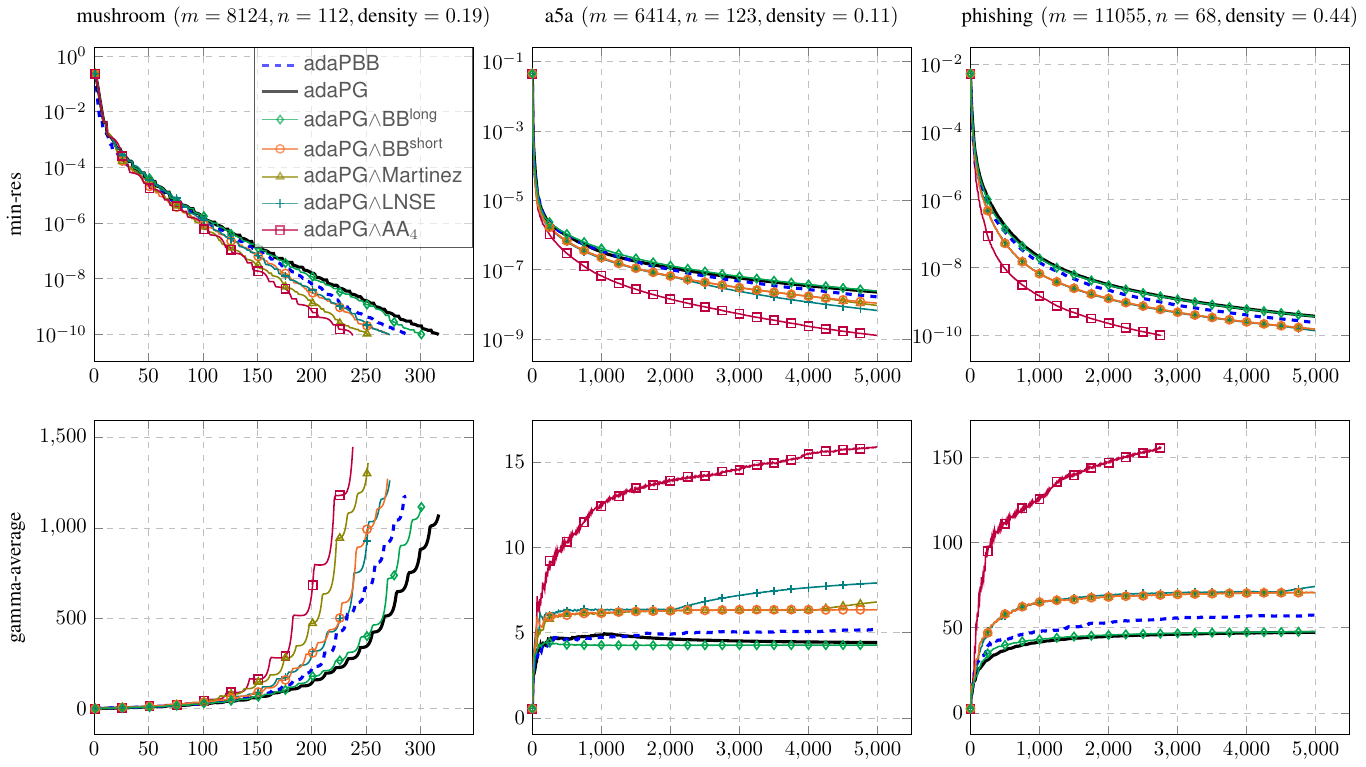}%
	\vspace{-0.75\baselineskip}%
	\caption{%
		Regularized logistic regression (\(m\) and \(n\) are the number of samples and features).
		The \(\ell_1\)-regularization parameter \(\lamb\) is set as in \cite[\S6.1]{zhou2024adabb}.%
	}%
\end{figure}

\begin{figure}
	\includetikz[width=\linewidth]{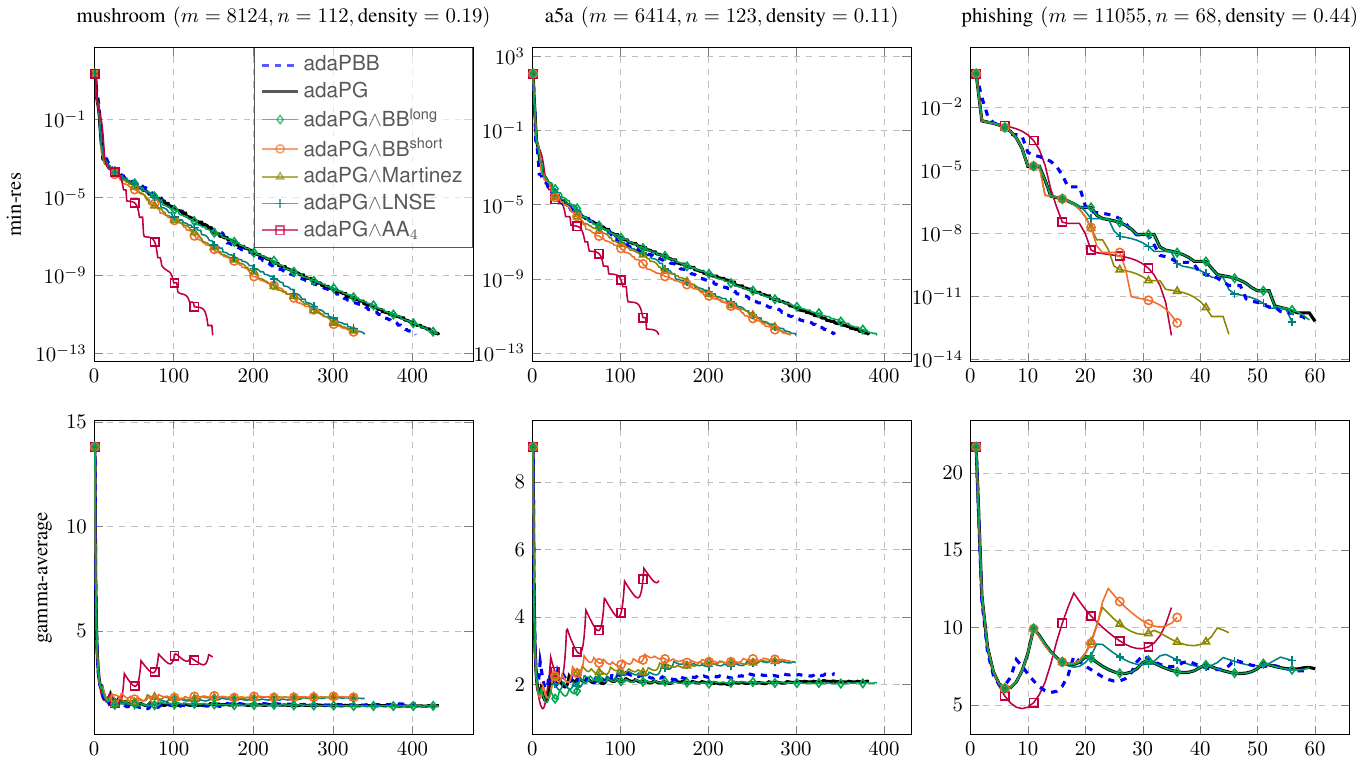}%
	\vspace{-0.75\baselineskip}%
	\caption{%
		Cubic regularization problem with Hessian and gradient generated from the logistic loss problem evaluated at zero on the mushroom and phishing datasets.
		The cubic regularization parameter is set as \(M = 0.01\).
	}%
	\vspace{-.65\baselineskip}%
\end{figure}

All experiments confirm that quasi-Newton--type ideas can yield considerable improvements within adaptive methods.
The convergence speed is evidently correlated with the magnitude of the stepsizes, as confirmed by \cref{thm:sumgamk}.
The winner turns out to be Anderson acceleration \eqref{eq:AA} (\(m=4\)), consistently outperforming all other choices.

	\bibliographystyle{plain}
	\bibliography{TeX/references_abbr.bib}
\end{document}